\newtheorem {definition}{Definition}
\newtheorem {lemma}[definition]{Lemma}
\newtheorem {theorem}[definition]{Theorem}
\newtheorem {rem}[definition]{Remark}
\newcommand{\E}{\mathbb{E}}
\newcommand{\N}{\mathbb{N}}
\renewcommand{\P}{\mathbb{P}}
\newcommand{\R}{\mathbb{R}}
\newcommand{\eps}{\varepsilon}
\author{Leif D\"oring}
\address{Leif D\"oring, Departement Mathematik, ETH Z\"urich, R\"amistrasse 10, 8092 Z\"urich, Switzerland}
\author{Andreas E. Kyprianou}
\address{
Andreas E. Kyprianou, 
Department of Mathematical Sciences,
University of Bath,
Claverton Down,
Bath BA2 7AY,
United Kingdom
}
\title{Perpetual Integrals for L\'evy Processes}
\begin{document}
\maketitle
\begin{abstract}
	Given a L\'evy process $\xi$ we ask for necessary and sufficient conditions for almost sure finiteness of the perpetual integral $\int_0^\infty f(\xi_s)ds$, where $f$ is a positive locally integrable function. If $\mu=\E[\xi_1]\in (0,\infty)$ and $\xi$ has local times we prove the 0-1 law $$\P\Big(\int_0^\infty f(\xi_s)\,ds<\infty\Big)\in \{0,1\}$$ with the exact characterization
\begin{align*}
	\P\Big(\int_0^\infty f(\xi_s)\,ds<\infty\Big)=0\qquad \Longleftrightarrow \qquad \int^\infty f(x)\,dx=\infty.
\end{align*}
The proof uses spatially stationary L\'evy processes, local time calculations, Jeulin's lemma and the Hewitt-Savage 0-1 law.\\\bigskip

\end{abstract}

	The study of perpetual integrals $\int f(X_s)ds$ with finite or infinite horizon for diffusion processes $X$ has a long history partially because of their use in the analysis of stochastic differential equations and insurance, financial mathematics as the present value of a continuous stream of perpetuities.
	
\smallskip
The main result of the present article is a characterization of finiteness for perpetual integrals of L\'evy processes:
\begin{theorem}\label{theorem}
	Suppose that $\xi$ is a L\'evy process that has strictly positive mean $\mu<\infty$, local times and is not a compound Poisson process. If $f$ is a measurable locally integrable positive function, then the following 0-1 law holds:
	\begin{align}\label{statementa}
	\P\Big(\int_0^\infty f(\xi_s)\,ds<\infty\Big)=1\qquad &\Longleftrightarrow \qquad \int^\infty f(x)\,dx<\infty\tag{T1}
\end{align}
	and
\begin{align}\label{statementb}
	\P\Big(\int_0^\infty f(\xi_s)\,ds<\infty\Big)=0\qquad &\Longleftrightarrow \qquad \int^\infty f(x)\,dx=\infty.\tag{T2}
\end{align}
\end{theorem}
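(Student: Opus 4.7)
\emph{Strategy.} I will establish two separate implications: (i) $\int^\infty f(x)\,dx<\infty\Rightarrow\P(A)=1$, and (ii) $\int^\infty f(x)\,dx=\infty\Rightarrow\P(A)=0$, where $A=\{\int_0^\infty f(\xi_s)\,ds<\infty\}$. The common starting point is the occupation density formula
\[
\int_0^\infty f(\xi_s)\,ds=\int_\R f(x)\,L^x_\infty\,dx,
\]
available because $\xi$ has local times. Since $\xi_t\to+\infty$ a.s., the overall infimum $\underline\xi_\infty:=\inf_{s\ge 0}\xi_s$ is finite a.s., so $L^x_\infty=0$ for $x<\underline\xi_\infty$. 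Together with local integrability of $f$ and local boundedness of $x\mapsto L^x_\infty$, this reduces everything to the analysis of $\int_M^\infty f(x)\,L^x_\infty\,dx$ for an arbitrarily large deterministic $M$.

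\emph{Hewitt--Savage.} Write $\xi^{(k)}=(\xi_{k-1+t}-\xi_{k-1})_{t\in[0,1]}$, $k\ge 1$, for the iid unit-length blocks of the path. For any permutation $\pi$ of $\{1,\ldots,N\}$ the reordered process $\tilde\xi$ satisfies $\tilde\xi_N=\xi_N$ (because $\xi_N=\sum_{k=1}^N\xi^{(k)}_1$ is a symmetric function of the blocks) and hence $\tilde\xi_t=\xi_t$ for every $t\ge N$; the cut-off $\int_0^N f(\tilde\xi_s)\,ds$ is a.s.\ finite by local integrability of $f$ and boundedness of the c\`adl\`ag path on $[0,N]$. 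So $A$ lies in the exchangeable $\sigma$-algebra of the iid sequence $(\xi^{(k)})$, and Hewitt--Savage yields $\P(A)\in\{0,1\}$.

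\emph{Direction (i).} The potential density $u(x):=\E[L^x_\infty]$ is bounded on $\R$: the renewal theorem for L\'evy processes with strictly positive mean gives $u(x)\to 1/\mu$ as $x\to+\infty$, and standard fluctuation estimates control $u$ at $-\infty$. Tonelli then gives
\[
\E\Bigl[\int_M^\infty f(x)\,L^x_\infty\,dx\Bigr]\le\|u\|_\infty\int_M^\infty f(x)\,dx<\infty,
\]
so the integral is a.s.\ finite.

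\emph{Direction (ii) --- the main obstacle.} I would apply Jeulin's lemma with spatial parameter $x\in[M,\infty)$ and the positive measurable field $x\mapsto L^x_\infty$: producing a constant $c>0$ with
\[
\liminf_{x\to\infty}\P\bigl(L^x_\infty>c\bigr)>c
\]
suffices to force $\int_M^\infty f(x)\,L^x_\infty\,dx=\infty$ a.s.\ whenever $\int_M^\infty f(x)\,dx=\infty$. To produce this lower bound I would introduce the \emph{spatially stationary} version of $\xi$ --- the process taken under the $\sigma$-finite measure $\int dy\,\P_y$, whose one-dimensional marginals are Lebesgue at every time by translation invariance --- and then, via the strong Markov property at the first-passage time $\tau_x^+$ together with the renewal theorem for the ascending ladder-height subordinator (where the hypotheses $\mu<\infty$, existence of local times, and non-compound-Poisson all enter), identify the weak limit of $L^x_\infty$ under $\P_0$ as $x\to\infty$ with the law of $L^0_\infty$ started from the stationary overshoot distribution. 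The genuinely delicate point --- and the main obstacle --- is to verify that this limiting variable is strictly positive with positive probability, which is what furnishes the required $c$ for Jeulin's lemma.
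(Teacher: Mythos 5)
Your sufficiency direction and your Hewitt--Savage argument are correct and essentially identical to the paper's (bounded potential density plus Tonelli for (i); exchangeability of $\{\int_0^\infty f(\xi_s)\,ds<\infty\}$ with respect to the iid unit-length increment blocks for the 0-1 law). Your variant of Jeulin's lemma is also legitimate: inspecting the standard proof, one only needs a uniform-in-$x$ lower bound $P(X_x>\eps)\ge\delta$ together with $P(\int f(x)X_x\,dx<\infty)=1$, so exact identical distribution of the $X_x$ is not literally required.

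The gap is in direction (ii), and it sits exactly where you flag it: you never produce the constant $c$ with $\liminf_{x\to\infty}\P^0(L^x_\infty>c)>0$. Writing $\P^0(L_\infty(x)\in\cdot)=\int\P^{y}(L_\infty(0)\in\cdot)\,\P^0(\xi_{T_x}-x\in dy)$ via the strong Markov property at first passage, the overshoot convergence $\P^0(\xi_{T_x}-x\in dy)\Rightarrow\rho(dy)$ does not by itself yield convergence, or even a uniform lower bound, for $\P^0(L_\infty(x)>c)$: you would additionally need some semicontinuity of $y\mapsto\P^{y}(L_\infty(0)>c)$, which you do not address and which is not routine. The paper sidesteps the limit entirely by changing the initial distribution: it runs the process under $\P^\rho$, where $\rho$ is the stationary overshoot law, so that the strong Markov property combined with spatial stationarity gives that $L_\infty(x)$ has \emph{exactly} the same law for every $x>0$ --- no limit in $x$, hence no continuity issue --- and non-triviality of that common law follows from non-polarity of points (Kesten--Bretagnolle, available under the local-time hypothesis for non--compound-Poisson processes). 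The price is one additional lemma transferring the hypothesis $\P^0(\int_0^\infty f(\xi_s)\,ds<\infty)=1$ to $\P^\rho(\int_0^\infty f(\xi_s)\,ds<\infty)=1$, which is obtained by hitting $0$ from $x$ with positive probability and invoking the Hewitt--Savage 0-1 law again. If you replace your limiting argument by this change of initial law, your outline closes into a complete proof along the paper's lines.
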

 Let us briefly compare the theorem with the existing literature:
 
 \smallskip
  (i) If $\xi$ is a Brownian motion with positive drift, then results were obtained through the Ray-Knight theorem, Jeulin's lemma and Khashminkii's lemma by Salminen/Yor \cite{Yor}, \cite{Yor2}.
  
  \smallskip
  (ii) For spectrally negative $\xi$, i.e. $\xi$ only jumps downwards, the equivalence was obtained in Khoshnevisan/Salminen/Yor \cite{Yor2}, see also Example 3.9 of Schilling/Voncracek \cite{Schilling}. The spectrally negative case also turns out to be easier in our proof.
  
   \smallskip
(iii) If $f$ is (ultimately) decreasing, results for general L\'evy processes have been proved in Erickson/Maller \cite{Maller}. In this case the result stated in Theorem \ref{theorem} follows easily from the law of large numbers by estimating $2\mu t>\xi_t>\frac 1 2 \mu t$ for $t$ big enough. The very same argument also shows that for $\mu=+\infty$ the integral test (T) fails in general. For a result in this case we again refer to \cite{Maller}.
\begin{rem}
	It is not clear whether or not the assumption $\xi$ having local time plays a role. For (ultimately) decreasing $f$ the existence of local time is clearly not needed, whereas we have no conjecture for general $f$.
\end{rem}

\section*{Proof of Theorem \ref{theorem}}

	Before going into the proof let us fix some notation and facts needed below. For more definitions and background we refer for instance to \cite{bertoin} or \cite{Kyp}. The law of $\xi$ issued from $x\in \R$ will be denoted by $\P^x$, abbreviating $\P=\P^0$, and the characteristic exponent is defined as
\begin{align*}
	\Psi(\lambda):=-\log \E\big[\exp(i\lambda \xi_1)\big],\quad \lambda\in \R.
\end{align*}
We recall from Theorem V.1 of \cite{bertoin} that $\xi$ has local times $\big(L_t(x)\big)_{t\geq 0,x\in\R}$ if and only if 
\begin{align}\label{LT}
	\int_{-\infty}^\infty \mathcal{R}\left(\frac{1}{1+\Psi(r)}\right)\,dr<\infty.
\end{align}
This means that for any bounded measureable function $f:\R\to [0,\infty)$ the occupation time formula
\begin{align*}
	\int_0^t f(\xi_s)\,ds=\int_\R f(x) L_t(x)\,dx,\quad t\geq 0,
\end{align*}
holds almost surely.\\
A consequence of \eqref{LT} is also that points are non-polar. More precisely, a Theorem of Kesten and Bretagnolle states that $\P(\tau_x<\infty)>0$ for all $x>0$ if $\tau_x=\inf\{t: \xi_t=x\}$, see for instance Theorem 7.12 of \cite{Kyp}.

Throughout we assume $\xi$ is transient so that $\int_{-\eps}^\eps \mathcal {R}(\frac{1}{\psi(r)})\,dr<\infty$ and, consequently, \eqref{LT} implies $\int_{-\infty}^\infty \mathcal{R}\left(\frac{1}{\Psi(r)}\right)\,dr<\infty.$
But then Theorem II.16 of \cite{bertoin} implies that the potential measure
\begin{align*}
	U(dx)=\int_0^\infty \P\big(\xi_s\in dx\big)\,ds
\end{align*}
has a bounded density $u(x)$ with respect to the Lebesgue measure.\medskip

  We start with the easy direction of Theorem 1:
\begin{proof}[Proof of Theorem \ref{theorem}, Sufficiency of Integral Test] 
	Suppose that $\int_\R f(x)dx<\infty$. Since we assume that $\xi$ is transient and has a local time we can use the existence and boundedness of the potential density to obtain
\begin{align*}
	\E\Big[\int_0^\infty f(\xi_s)\,ds\Big]
=\int_\R f(x) \int_0^\infty \P(\xi_s\in dx)\,ds
=\int_\R f(x) u(x)\,dx
\leq \sup_{x\in \R}u(x) \int_\R f(x)\,dx<\infty.
\end{align*}
Since finiteness of the expectation implies almost sure finiteness the sufficiency of the integral test for almost sure finiteness of the perpetual integral is proved.
\end{proof}



For the reverse direction we use Jeulin's lemma, here is a simple version:
\begin{lemma}\label{Jeulin}
	Suppose $(X_x)_{x\in\R}$ are non-negative, non-trivial and identically distributed random variables on some probability space $(\Omega, \mathcal F, P)$, then
	\begin{align*}
		P\Big(\int_\R f(x) X_x\,dx<\infty
		\Big)=1\qquad \Longrightarrow\qquad \int_\R f(x)\,dx<\infty.
\end{align*}
\end{lemma}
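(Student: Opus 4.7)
The plan is to prove the contrapositive: assuming $\int_\R f(x)\,dx=\infty$, I will show $\int_\R f(x)X_x\,dx=\infty$ almost surely (which is stronger than the stated conclusion). First I would truncate. Since $X_0$ is non-negative and non-trivial, any $c>0$ satisfies $m:=\E[X_0\wedge c]>0$; fix such a $c$. The variables $X_x\wedge c$ remain identically distributed, are bounded by $c$, and satisfy $\int f(x)(X_x\wedge c)\,dx\le \int f(x)X_x\,dx$, so it suffices to argue for the truncated field.

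Next I would slice and apply Fubini. Partition $\R$ into unit intervals $I_n$, $n\in\Z$, and set $a_n:=\int_{I_n} f(x)\,dx$ and $A_n:=\int_{I_n} f(x)(X_x\wedge c)\,dx$. Then $0\le A_n\le c\, a_n$, and by Fubini together with the identical-distribution hypothesis, $\E[A_n]=m\,a_n$. Because $\int f\,dx=\infty$, the deterministic partial sums $T_N:=\sum_{|n|\le N} a_n$ diverge to $\infty$ with $N$.

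Finally, the contradiction. Suppose for contradiction that $Z:=\int f(x)(X_x\wedge c)\,dx<\infty$ almost surely, and let $S_N:=\sum_{|n|\le N}A_n\le Z$. The ratio $S_N/T_N$ is bounded by $c$ and satisfies $S_N/T_N\le Z/T_N\to 0$ almost surely, so dominated convergence gives $\E[S_N/T_N]\to 0$; but $\E[S_N/T_N]=m>0$ for every $N$, a contradiction. The main obstacle is spotting the correct normalization: dividing the random partial sum $S_N$ by the deterministic divergent $T_N$ produces a sequence bounded by $c$ which must tend to $0$ if $Z$ is finite, while Fubini simultaneously pins its expectation to the strictly positive constant $m$. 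Once this trick is in hand, the truncation and Fubini computation are routine.
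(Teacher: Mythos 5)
Your argument is correct, and it reaches the conclusion by a genuinely different elementary route than the paper. The paper also reduces matters to a uniform-in-$x$ positive lower bound on a modified expectation of $X_x$ plus Tonelli, but it localizes in $\omega$ rather than in $x$: it fixes $\eps>0$ with $P(X_x>\eps)=\delta>0$, picks one event of the form $\{\int_\R f(x)X_x\,dx\le N\}$ with probability at least $1-\delta/2$, deduces $E[X_x\1_{A}]\ge \eps\delta/2$ uniformly in $x$ on that event $A$, and concludes $N\ge (\eps\delta/2)\int_\R f(x)\,dx$ in one line. You instead truncate in value ($X_x\wedge c$), partition the domain into unit intervals, and compare the random partial sums $S_N$ to the divergent deterministic sums $T_N$ via dominated convergence; the normalization $S_N/T_N$ is a nice trick and avoids having to select a good event or a level $\eps$ from the common law. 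Two small caveats, neither fatal in the paper's setting: (i) your identity $E[S_N/T_N]=E[S_N]/T_N=m$ requires $T_N<\infty$, i.e.\ each $a_n=\int_{I_n}f(x)\,dx<\infty$, so you are implicitly using local integrability of $f$; this holds where the lemma is applied, but the paper's estimate needs no such assumption. (ii) Your opening sentence promises the stronger conclusion that $\int_\R f(x)X_x\,dx=\infty$ \emph{almost surely}, but the contradiction argument only refutes ``$Z<\infty$ a.s.'', i.e.\ it yields $P\big(\int_\R f(x)X_x\,dx=\infty\big)>0$. That is exactly the contrapositive the lemma needs, so the proof stands, but the ``almost surely'' claim is an overstatement as written (it would follow only after invoking a 0--1 law such as the one the paper establishes separately).
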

\begin{proof}
	Since $X_x$ are identically distributed, we may choose $\eps>0$ so that $P(X_x>\eps)=\delta>0$ for all $x\in \R$. Since $\int_\R f(x) X_x\,dx$ is almost surely finite, there is some $N\in \N$ so that $P(A_N)> 1-\delta/2$ with $A_N=\{\int_\R f(x) X_x\,dx>N\}$. Hence, we have 
	\begin{align*}
		E\big[X_x 1_{A_N}\big]
		\geq \eps P(\{X_x>\eps\}\cap A_N)
		>\eps \delta/2>0.
	\end{align*}
	But then
	\begin{align*}
		N\geq N P(A_N)\geq E\Big[\int_\R f(x)X_x\,dx\,1_{A_N}\Big]=\int_\R f(x)E[X_x1_{A_N}]\,dx\geq \eps \delta/2 \int_\R f(x)\,dx.
	\end{align*}
	The proof is now complete.
\end{proof}

Note that there are different versions of Jeulin's lemma (see for instance \cite{Yano}). Most commonly, one refers to Jeulin's lemma if $P(\int_\R f(x) X_x\,dx<\infty)>0$ but more assumptions on the $X_x$ are posed. Those extra assumptions on $X_x$ are not satisfied in our setting but we can employ a 0-1 law that allows us to work with $P(\int_\R f(x) X_x\,dx<\infty)=1$.

\medskip
We would like to apply Jeulin's lemma via the occupation time formula
\begin{align*}
	\int_0^\infty f(\xi_s)\,ds=\lim_{t\uparrow\infty}\int_0^t f(\xi_s)\,ds=\lim_{t\uparrow \infty} \int_0^\infty f(x)L_t(x)\,dx=\int_0^\infty f(x) L_\infty(x)\,dx
\end{align*}
with $L_\infty(x):=\lim_{t\uparrow \infty} L_t(x)$. The argument is too simplistic because the distribution of $L_\infty(x)$ depends on $x$. Only if $\xi$ is spectrally negative the laws $L_\infty(x)$ are independent of $x$ by the strong Markov property. To make this idea work we work with randomized initial conditions instead. In what follows we chose a particularly convenient initial distribution motivated by a result from fluctuation theory (see Lemma 3 of \cite{BertoinSavov}). Our assumption $\E[\xi_1]<\infty$ implies that
\begin{align}\label{conv}
\P\big(\xi_{T_z}-z\in dy\big)\stackrel{z\to\infty}{\Longrightarrow}\rho(dy),
\end{align}
where $T_z=\inf\{t\geq 0:  \xi_t\geq z\}$ and $\rho$ is a non-degenerate probability law, called the stationary overshoot distribution. The convergence in \eqref{conv} is  a consequence of the quintuple law of \cite{Kyp} and the distribution $\rho$ can be written down explicitly.

\medskip
Since $\rho$ is the stationary overshoot distribution we have
\begin{align*}
	 \P^{\rho}\big( \xi_{T_a}-a\in dy\big):=\int  \P^{x}\big( \xi_{T_a}-a\in dy\big)\rho(dx)=\rho(dy),\quad \forall a>0,
\end{align*}
so that spatial stationarity holds due to the strong Markov property: under $\P^{\rho}$
\begin{align}\label{spatial}
	( \xi^{(a)}_t)_{t\geq 0}:=( \xi_{T_a+t}-a)_{t\geq 0}
\end{align}
has law $\P^{\rho}$ for all $a>0$. The stationarity property \eqref{spatial} will be the key to apply Jeulin's lemma.
\begin{lemma}\label{le2}
	For any $x>0$ we have 
	\begin{align*}
		 \P^{\rho}( L_\infty(x)\in dy)= \P^{\rho}( L_\infty(1)\in dy).
\end{align*}

\end{lemma}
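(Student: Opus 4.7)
The plan is to leverage the spatial stationarity \eqref{spatial} to reduce the law of $L_\infty(x)$ under $\P^{\rho}$ to that of the local time at level $0$ of a shifted, identically distributed copy of $\xi$. Fix $x>0$ and let $\xi^{(x)}_t=\xi_{T_x+t}-x$; denote its local time by $L^{(x)}$. By \eqref{spatial}, under $\P^{\rho}$ the process $\xi^{(x)}$ again has law $\P^{\rho}$, and consequently $L^{(x)}_\infty(0)$ has the same distribution under $\P^{\rho}$ as $L_\infty(0)$ under $\P^{\rho}$. Since the resulting law does not involve $x$, it is enough to establish the pathwise identity
\begin{align*}
L_\infty(x)=L^{(x)}_\infty(0)\qquad \P^{\rho}\text{-a.s.},
\end{align*}
as this immediately gives $L_\infty(x)\stackrel{d}{=}L_\infty(1)$ under $\P^{\rho}$ for every $x>0$.

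This identity will follow from a shift relation $L^{(x)}_t(0)=L_{T_x+t}(x)-L_{T_x}(x)$ for all $t\geq 0$, combined with $L_{T_x}(x)=0$. To derive the shift relation I would apply the occupation time formula to $\xi^{(x)}$ against an arbitrary bounded measurable $f$, perform the substitutions $u=T_x+s$ and $y=z-x$ to rewrite the integral as one over $\xi$ between $T_x$ and $T_x+t$, and match the two occupation densities to identify $L^{(x)}_t(y)=L_{T_x+t}(y+x)-L_{T_x}(y+x)$ for a.e.\ $y$; at the distinguished level $y=0$ this can be justified via the canonical construction of the local time at a point as a continuous additive functional supported on $\overline{\{s:\xi_s=x\}}$, together with the strong Markov property applied at $T_x$.

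The main obstacle is showing $L_{T_x}(x)=0$ $\P^{\rho}$-almost surely. On $\{\xi_0\geq x\}$ this is immediate since $T_x=0$. On $\{\xi_0<x\}$ the c\`adl\`ag property of $\xi$ and the definition of $T_x$ force $\xi_t<x$ for every $t\in[0,T_x)$, so the support of the measure $dL_s(x)$ misses $[0,T_x)$; combined with the continuity of $t\mapsto L_t(x)$, which is a standard consequence of the L\'evy structure under assumption \eqref{LT} and the non-compound-Poisson hypothesis, this yields $L_{T_x-}(x)=L_{T_x}(x)=0$. Sending $t\to\infty$ in the shift relation then produces $L_\infty(x)=L^{(x)}_\infty(0)$ and, together with the distributional equality obtained from \eqref{spatial}, completes the proof.
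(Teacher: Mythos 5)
Your proof is correct and takes essentially the same route as the paper: both arguments rest on the facts that $L_{T_x}(x)=0$, that the post-$T_x$ path shifted down by $x$ has law $\P^{\rho}$ (spatial stationarity of the overshoot), and that local time translates with the path, the only difference being that the paper phrases this as a chain of kernel identities via the strong Markov property while you phrase it as a pathwise identity $L_\infty(x)=L^{(x)}_\infty(0)$. The technical points you elaborate (identification of the shifted local time at the distinguished level and continuity of $t\mapsto L_t(x)$) are implicitly used but not spelled out in the paper's version.
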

\begin{proof}
First note that $\P^{\rho}(T_x<\infty)=1$ for all $x>0$ and ${L}_\cdot (x)$ only starts to increase at some time at or after  $T_x$. Then
	\begin{eqnarray*}
	\P^{\rho}(L_\infty(x)\in dy)
	&=&\int_0^\infty \P^{z}( L_\infty(x)\in dy)\P^\rho(\xi_{T_x}\in dz)\\
	&=&\int_0^\infty \P^{z+x}( L_\infty(x)\in dy)\P^\rho(\xi_{T_x}-x\in dz)\\
	&=& \int_0^\infty\P^{z+x}(L_\infty(x) \in dy)\rho(dz)\\
	&=& \int_0^\infty \P^{z}(L_\infty(0) \in dy)\rho(dz),
	\end{eqnarray*}
using the strong Markov property, the spatial stationarity of $\P^{\rho}$ and spatial homogeneity of $\xi$. Since the righthand side is independent of $x$ the proof is complete.
\end{proof}

Next, we use the Hewitt-Savage 0-1 law (see \cite{HS}) in order to get the weak version of Jeulin's lemma going. If $X^0,X^1,...$ denotes a sequence of random variables taking values in some measurable space, then an event $A\in \sigma(X^0,X^1,...)$ is called exchangeable if it is invariant under finite permutations (i.e. only finitely many indices are changed) of the sequence $X^0,X^1,...$. The Hewitt-Savage 0-1 law states that any exchangeable event of an iid sequence has probability $0$ or $1$.

\begin{lemma}\label{7}
	$\P\big(\int_0^\infty f(\xi_s)\,ds<\infty\big)\in \{0,1\}$.
\end{lemma}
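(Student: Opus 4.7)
The plan is to apply the Hewitt-Savage 0-1 law to the iid sequence of unit-time path fragments of $\xi$. For $n\geq 1$, set
\[
Z_n:=(\xi_{n-1+t}-\xi_{n-1})_{t\in[0,1]},
\]
viewed as a random element of Skorokhod space. By the stationary independent increments property, $(Z_n)_{n\geq 1}$ is iid under $\P$, and since $\xi_0=0$ the whole path is reconstructed through $\xi_t=\sum_{k=1}^{n-1}Z_k(1)+Z_n(t-(n-1))$ for $t\in[n-1,n]$. In particular, the event $A:=\{\int_0^\infty f(\xi_s)\,ds<\infty\}$ is $\sigma(Z_1,Z_2,\ldots)$-measurable.

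Next I would check that $A$ is exchangeable. For a finite permutation $\sigma$ of $\N$ fixing every index outside $\{1,\ldots,M\}$, let $\xi'$ denote the path built from the permuted sequence $(Z_{\sigma(n)})$. Because the partial sum $\sum_{k=1}^{M}Z_k(1)$ is invariant under $\sigma$, unrolling the formula above yields $\xi'_t=\xi_t$ for every $t\geq M$, and hence $\int_M^\infty f(\xi'_s)\,ds=\int_M^\infty f(\xi_s)\,ds$.

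The step I expect to require the most care is to verify that $\int_0^M f(\xi_s)\,ds<\infty$ almost surely: otherwise exchangeability could fail on the event where a finite-time integral is infinite. For this I would localize using $\mathbf{1}_{|\xi_s|\leq R}$; Tonelli together with the fact that the occupation measure on $[0,M]$ has density bounded by the potential density $u$ gives
\[
\E\Big[\int_0^M f(\xi_s)\,\mathbf{1}_{|\xi_s|\leq R}\,ds\Big]\leq \sup_y u(y)\int_{-R}^{R}f(x)\,dx<\infty
\]
by local integrability of $f$. Since $\sup_{s\leq M}|\xi_s|<\infty$ a.s.\ by the cadlag property, letting $R\to\infty$ gives $\int_0^M f(\xi_s)\,ds<\infty$ a.s., and the same conclusion applies to $\xi'$ by equidistribution. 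Therefore $A$ is invariant, up to a null set, under every finite permutation of $(Z_n)$, and the Hewitt-Savage 0-1 law delivers $\P(A)\in\{0,1\}$.
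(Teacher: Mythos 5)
Your proof is correct and follows essentially the same route as the paper: decompose $\xi$ into its iid unit-time increment processes and apply the Hewitt--Savage 0-1 law. The only difference is that you explicitly justify the exchangeability of the event (via the a.s.\ finiteness of $\int_0^M f(\xi_s)\,ds$, using local integrability of $f$ and boundedness of the potential density), a point the paper dismisses with ``clearly''; this is a worthwhile detail but not a different argument.
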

\begin{proof}
	The idea of the proof is to write $\Lambda:=\{\int_0^\infty f(\xi_s)ds<\infty\}$ as an exchangeable event with respect to the iid increments of $\xi$ on intervals $[n,n+1]$ so that $\P(\Lambda)\in \{0,1\}$. Let $\mathcal D$ denote the RCLL functions $w:[0,1]\to \R$. If $\xi$ is the given L\'evy process, then define the increment processes as
	\begin{align*}
		(\xi^{n}_t)_{t\in [0,1]}=(\xi_{n+t}-\xi_n)_{t\in [0,1]}.
	\end{align*}	
	The L\'evy property implies that the sequence $\xi^0,\xi^1,...$ is iid on $\mathcal D$. Furthermore, note that $\xi$ can be reconstructed from the $\xi^n$ through
	\begin{align*}
		\xi_r=\xi^n_{r-n}+\sum_{i=0}^{n-1} \xi^i_1\qquad \forall r\in [n,n+1).
	\end{align*}
Using that $g_1:(w_t)_{t\in [0,1)} \mapsto (w_1)_{t\in [0,1)}$,  $g_2:(w,w')_{t\in [0,1)}\mapsto (w_t+w'_t)_{t\in [0,1)}$ and $g_3:(w_t)_{t\in [0,1)}\mapsto \int_0^1 f(w_s)ds$  are measurable mappings, there are measurable mappings $g^n:\mathcal D^n\to \R$ such that
\begin{align*}
\int_0^1 f\Big(\xi^n_r+\sum_{i=0}^{n-1} \xi_1^i\Big)\,dr=g^n(\xi^0,...,\xi^n).
\end{align*}
As a consequence we find that
\begin{align*}
	\Big\{\int_0^\infty f(\xi_s)\,ds<\infty\Big\}
	&=\Big\{\sum_{n=0}^\infty \int_n^{n+1}f(\xi_s)\,ds<\infty\Big\}\\
	&=\Big\{\sum_{n=0}^\infty \int_0^{1}f\Big(\xi^n_r+\sum_{i=0}^{n-1}\xi_1^i\Big)\,dr<\infty\Big\}\\
	&=\Big\{\sum_{n=0}^\infty g^n(\xi^0,...,\xi^n)<\infty\Big\}\\
	&\in \sigma(\xi^0,\xi^1,...).
\end{align*}
Since clearly $\Lambda$ is exchangeable for $\xi^0,\xi^1,...$ the Hewitt-Savage 0-1 law implies the claim.
	\end{proof}
\begin{lemma}\label{9}
	Suppose $\P(\int_0^\infty f(\xi_s)ds<\infty)=1$, then $\P^\rho(\int_0^\infty f(\xi_s)ds<\infty)=1$.
\end{lemma}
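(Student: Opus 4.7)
The plan is to reduce the claim to a comparison between two integrals of a single $\{0,1\}$-valued function $h$ and to bridge the gap between them via total-variation convergence of overshoot distributions.

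First, I would set $h(y):=\P^y\bigl(\int_0^\infty f(\xi_s)\,ds<\infty\bigr)$ and note that the proof of Lemma~\ref{7} goes through verbatim when $\P$ is replaced by $\P^y$ for any $y\in\R$ (the Hewitt--Savage argument uses only the iid structure of the increments). Consequently $h(y)\in\{0,1\}$ for every $y\in\R$, and $h(0)=1$ by hypothesis.

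Next, applying the strong Markov property at $T_a$ under $\P^0$, together with $T_a<\infty$ $\P^0$-a.s.\ (since $\mu>0$) and $\int_0^{T_a} f(\xi_s)\,ds<\infty$ $\P^0$-a.s.\ (local integrability of $f$), I would obtain
\[
1 \;=\; \P^0\Big(\int_{T_a}^\infty f(\xi_s)\,ds<\infty\Big) \;=\; \int h(y)\,\P^0(\xi_{T_a}\in dy).
\]
Repeating the same strong Markov step under $\P^\rho$, and using the spatial stationarity \eqref{spatial} (which yields $\xi_{T_a}-a\sim\rho$ under $\P^\rho$), I would get
\[
\P^\rho\Big(\int_0^\infty f(\xi_s)\,ds<\infty\Big) \;=\; \int h(a+y)\,\rho(dy),
\]
independent of $a>0$. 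Writing $\mu_a$ for the law of $\xi_{T_a}-a$ under $\P^0$, so that the first identity reads $\int h(a+y)\,\mu_a(dy)=1$, subtraction and $\|h\|_\infty\le 1$ give, for every $a>0$,
\[
\bigg|\,1-\P^\rho\Big(\int_0^\infty f(\xi_s)\,ds<\infty\Big)\,\bigg|\;=\;\bigg|\int h(a+y)(\mu_a-\rho)(dy)\bigg|\;\le\;\|\mu_a-\rho\|_{\mathrm{TV}},
\]
with the left-hand side independent of $a$.

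The hard part will be to upgrade the weak convergence in \eqref{conv} to the total-variation statement $\|\mu_a-\rho\|_{\mathrm{TV}}\to 0$ as $a\to\infty$. I would get this from continuous-time renewal theory applied to the ascending ladder height subordinator $H$ of $\xi$: $H$ has finite mean (since $\E[\xi_1]<\infty$) and is non-arithmetic (since $\xi$ is not compound Poisson), so Blackwell's theorem in its subordinator form furnishes total-variation convergence of the overshoots to the stationary law $\rho$. Plugging this into the bound above forces the left-hand side to vanish, yielding $\P^\rho\bigl(\int_0^\infty f(\xi_s)\,ds<\infty\bigr)=1$ as required.
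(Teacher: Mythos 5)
Your first two steps are sound: the Hewitt--Savage argument of Lemma \ref{7} does apply verbatim under every $\P^y$, so $h(y)\in\{0,1\}$ for all $y$, and the strong Markov property at $T_a$ gives $\int h(a+y)\,\mu_a(dy)=1$ together with $\P^\rho(\int_0^\infty f(\xi_s)\,ds<\infty)=\int h(a+y)\,\rho(dy)$ (modulo the finiteness of $\int_0^{T_a}f(\xi_s)\,ds$, which needs the occupation formula and boundedness of $L_{T_a}(\cdot)$ rather than local integrability of $f$ alone). The genuine gap is the last step. Blackwell's theorem, and the key renewal theorem equivalent to it, control $\int g\,d\mu_a$ only for directly Riemann integrable --- in particular essentially continuous --- test functions $g$; they deliver exactly the weak convergence \eqref{conv} and nothing stronger. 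Your $h$ is the indicator of an arbitrary Borel set, so you truly need setwise or total-variation convergence, and that is known to fail for general non-lattice renewal structures: if the ascending ladder height renewal measure is purely atomic but non-lattice (interarrival law supported on $\{1,\alpha\}$ with $\alpha$ irrational, in the random walk picture), then $\mu_a$ is purely atomic for every $a$ while $\rho$ is absolutely continuous, so $\|\mu_a-\rho\|_{\mathrm{TV}}$ stays maximal. Total-variation convergence of overshoots requires a spread-out type hypothesis on the ladder height subordinator, which is neither assumed in Theorem \ref{theorem} nor obviously implied by the existence of local times for $\xi$. As written, the step you flag as the hard part is not merely hard but unjustified, and possibly false in this generality.

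The paper closes the argument by exploiting the other consequence of the local-time hypothesis: points are non-polar (Kesten--Bretagnolle), so $\P^x(\tau_0<\infty)>0$ for $\tau_0=\inf\{t:\xi_t=0\}$ and every $x$. In your notation, the strong Markov property at $\tau_0$ gives $h(x)\geq h(0)\,\P^x(\tau_0<\infty)>0$, hence $h(x)=1$ by the pointwise zero--one law you already established, and then $\P^\rho(\int_0^\infty f(\xi_s)\,ds<\infty)=\int h\,d\rho=1$ directly. Replacing your first passage time $T_a$ by the hitting time $\tau_0$ makes your argument close without any convergence of overshoot distributions, because the identity $h\equiv 1$ is obtained at every fixed point rather than only $\mu_a$-almost everywhere.
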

\begin{proof}
	The statement is obvious if $\xi$ is a subordinator, so we assume it is not.
	\smallskip
	
	Next we show that  $\P^x(\int_0^\infty f(\xi_s)ds<\infty)=1$ for any $x>0$. To see this we use the strong Markov property at $\tau_0=\inf\{t: \xi_t=0\}$ which is finite with positive probabillity since points in $\R$ are non-polar:
	\begin{align*}
		\P^x\Big(\int_0^\infty f(\xi_s)\,ds<\infty\Big)
		&\geq \P^x\Big(\int_{\tau_0}^\infty f(\xi_s)\,ds<\infty, \tau_0<\infty\Big)\\
		&=\P^x\Big(\int_{0}^\infty f(\xi_{s+\tau_0}-\xi_{\tau_0})\,ds<\infty, \tau_0<\infty\Big)\\
		&=\P^0\Big(\int_0^\infty f(\xi_s)\,ds<\infty\Big)\P^x(\tau_0<\infty)\\
		&>0.
	\end{align*}	
	But then the 0-1 law of Lemma \ref{7} implies that $\P^x\Big(\int_0^\infty f(\xi_s)\,ds<\infty\Big)=1$. Finally, we obtain	
	\begin{align*}
		\P^\rho\Big(\int_0^\infty f(\xi_s)\,ds<\infty\Big)&=\int_\R \P^x\Big(\int_0^\infty f(\xi_s)\,ds<\infty\Big)\rho(dx)
		=\int_\R \rho(dx)
		=1
	\end{align*}
	and the proof is complete.
\end{proof}

Now we are ready to prove the more delicate part of Theorem \ref{theorem}.
\begin{proof}[Proof of Theorem \ref{theorem}, Necessity of Integral Test]
Suppose $\P(\int_0^\infty f(\xi_s)\,ds<\infty)>0$ which then implies $\P(\int_0^\infty f(\xi_s)\,ds<\infty)=1$ by Lemma \ref{7}. Hence,  $\P^{\rho}(\int_0^\infty f(\xi_s)\,ds<\infty)=1$ by Lemma \ref{9}. Using the occupation time formula we get
\begin{align*}
	\int_0^\infty f( \xi_s)\,ds
	=\lim_{t\to \infty} \int_0^t f( \xi_s)\,ds
	=\lim_{t\to \infty} \int_\R f(x)  L_t(x)\,dx
	= \int_\R f(x) L_\infty(x)\,dx\qquad \P^{\rho}\text{-a.s.}
\end{align*}	
In Lemma \ref{le2} we proved that $ L_\infty(x)$ is independent of $x$ under $\P^{\rho}$ so that Jeulin's Lemma implies $\int_\R f(x)dx<\infty$.
\end{proof}

\subsection*{Acknowledgement}
The authors thank Jean Bertoin for several discussions on the topic.


\begin{thebibliography}{99}
\bibitem{bertoin} J. Bertoin: "L\'evy processes." Cambridge Tracts in Mathematics 121, Cambridge University Press, 1996.

\bibitem{BertoinSavov} J. Bertoin, M. Savov: "Some applications of duality for L\'evy processes in a half-line." Bull. London Math. Soc. 43, pp. 97-110, 2011.
\bibitem{Duf} D. Dufresne: "The distribution of a perpetuity, with applications to risk theory and pension funding." Scand. Actuarial J., pp. 39-79, 1990.
\bibitem{HS} E. Hewitt, L. J. Savage: ``Symmetric measures on Cartesian products.'' Trans. Amer. Math. Soc., 80, pp. 470-501 (1955).
\bibitem{Maller} K.B. Erickson, R.A. Maller: "Generalised Ornstein-Uhlenbeck processes and the convergence of L\'evy integrals." S\'eminaire de Probabilit\'es XXXVIII, 2005.

\bibitem{Yor2} D. Khoshnevisan, P. Salminen, M. Yor: "A note on a.s. finiteness of perpetual integral functionals of diffusions." Electr. Comm. Probab., Vol. 11, pp. 108-117.
\bibitem{Kyp} A. Kyprianou, J.C. Pardo, V. Rivero: "Exact and asymptotic n-tuple laws at first and last passage." Ann. Appl. Probab. Volume 20, pp. 522-564, 2010.

\bibitem{Kyp} A. Kyprianou: ``Fluctuations of L\'evy Processes with Applications.'' Springer (2013).

\bibitem{Schilling} R. Schilling, Z. Vondra\v{c}ek:  ``Absolute continuity and singularity of probability measures induced by a purely discontinuous Girsanov transform of a stable process.'' arXiv:1403.7364


\bibitem{Yano}
A. Matsumoto, K. Yano: "On a zero-one law for the norm process of transient random walk.'' S\'eminaire de Probabilit\'es XLIII, pp. 105-126, 2011.

\bibitem{Yor} P. Salminen, M. Yor: "Properties of perpetual integral functionals of Brownian motion with drift." Ann. I.H.P., 41, pp. 335-347, 2005.
\bibitem{Yor2} P. Salminen, M. Yor: "Perpetual integral fuctionals as hitting and occupation times." Electr. J. Probab., Vol. 10, p. 371-419, 2005. 


\end{thebibliography}
\end{document}